\documentclass[11pt]{article}

\usepackage[T1]{fontenc}
\usepackage[utf8]{inputenc}
\usepackage[english]{babel}
\usepackage{amsmath, amssymb, amsthm}
\usepackage[margin=1in]{geometry}
\usepackage[hidelinks]{hyperref}

\theoremstyle{plain}
\newtheorem{theorem}{Theorem}[section]
\newtheorem{lemma}[theorem]{Lemma}
\newtheorem{proposition}[theorem]{Proposition}
\newtheorem{corollary}[theorem]{Corollary}

\theoremstyle{definition}
\newtheorem{definition}[theorem]{Definition}

\newtheorem{remark}[theorem]{Remark}

\newcommand{\Sn}{\mathfrak{S}_n}
\newcommand{\GS}{G_S}

\title{Unranking $S$-avoiding permutations with two forbidden offsets}
\author{Antônio Augusto André\\\small Inteli --- Instituto de Tecnologia e Liderança, São Paulo, Brazil}
\date{}

\begin{document}
\maketitle

\begin{abstract}
Given a finite family of combinatorial objects with a total order, ranking asks for the position of a given object, and unranking asks for the object occupying a given position. Both provide algorithmic access to families too large to enumerate. Kagey gave a polynomial-time algorithm for several position-restricted families and left open the case of $\{-1,\,1\}$-avoiding permutations. We show that when $|S| = 2$, the graph of forbidden squares is $2$-regular, a disjoint union of cycles, and that fixing a prefix leaves only paths and untouched cycles. This yields polynomial-time prefix counting, ranking and unranking for every two-element set of offsets, covering $\{-1,\,1\}$. The same structure gives a short proof that the number of $\{-1,\,1\}$-avoiding permutations equals the $n$-th ménage number for odd $n$ and exceeds it by $2$ for even $n$, proving a conjecture recorded in the OEIS entry A354152.
\end{abstract}

\section{Introduction}

Ranking a finite family of combinatorial objects under a total order means computing the position of a given object; unranking means producing the object at a given position. In 2024, Kagey~\cite{kagey} gave polynomial-time algorithms for both, for several families of position-restricted permutations, including derangements and ménage permutations, and observed that the same method extends to $\{i\}$-avoiding and $\{i,\,i+1\}$-avoiding permutations.

The obstruction described in \cite{kagey} is geometric: after placing a rook and deleting a row and a column, the two sides of the remaining board need not separate into the staircase pieces exploited in the ménage case. The observation of this note is that geometric separation is not necessary. The relevant structure is the graph of forbidden squares. For $|S|= 2$, this graph is $2$-regular, therefore a disjoint union of cycles. Fixing a prefix deletes vertices, and the remaining components are the arcs between them. In the column order they are interleaved, in the cycle they are intervals.

Counting $S$-avoiding permutations for a fixed two-element set $S$ is classical: the corresponding boards are circulant, and the rook polynomials of sums of powers of the cyclic permutation matrix have been studied by Kamenetsky~\cite{kamenetsky}. For $S = \{-1,1\}$, entry A354152 of the On-Line Encyclopedia of Integer Sequences (OEIS)~\cite{oeis} records a recurrence obtained by Zeilberger's method~\cite{zeilberger}. The problem here is different: counting the permutations that extend a given prefix. The structure developed for that purpose also settles a counting question, and Section~\ref{sec:application} records what it gives for $S = \{-1,1\}$.

\section{Preliminaries}

Throughout, $n$ denotes a positive integer, $[n] = \{1, \dots, n\}$, and $\Sn$
is the group of permutations of $[n]$.

\begin{definition}
Let $S \subset \mathbb{Z}$ be finite. A permutation $\pi \in \Sn$ is
\emph{$S$-avoiding} if $\pi(i) - i - s \not\equiv 0 \pmod n$ for every
$i \in [n]$ and every $s \in S$.
\end{definition}

We encode the restriction as a board, following \cite{kaplanskyriordan}. The
board $B_S$ consists of the pairs $(i,j) \in [n] \times [n]$ with
$j \not\equiv i+s \pmod n$ for every $s \in S$, and its complementary board
$B^c_S$ consists of the remaining pairs, the forbidden squares. The
$S$-avoiding permutations are exactly the placements of $n$ pairwise
nonattacking rooks on $B_S$.

\begin{definition}
Let $B$ be a board. Its rook polynomial is $p_B(x) = \sum_{k \geq 0} r_k x^k$
where $r_k$ is the number of ways of placing $k$ nonattacking rooks on the
squares of $B$.
\end{definition}

For a board $B$, let $G(B)$ be the graph with vertex set $B$ and an edge
between two squares whenever they share a row or a column.

\begin{definition}
A \emph{prefix} of length $\ell$, with $0 \leq \ell \leq n$, is a tuple
$\alpha = (\alpha_1, \dots, \alpha_\ell)$ of distinct elements of $[n]$;
for $\ell = 0$ it is the empty tuple. It is \emph{valid} if
$\alpha_i - i - s \not\equiv 0 \pmod n$ for every $i \leq \ell$ and every
$s \in S$. A permutation $\pi \in \Sn$ \emph{extends} $\alpha$ if
$\pi(i) = \alpha_i$ for every $i \leq \ell$, and $\#\mathrm{prefix}(\alpha)$
denotes the number of $S$-avoiding permutations extending $\alpha$.
\end{definition}

\begin{definition}
Let $\alpha$ be a prefix of length $\ell$. Deleting the rows $1, \dots, \ell$
and the columns $\alpha_1, \dots, \alpha_\ell$ leaves $n - \ell$ rows and
$n - \ell$ columns; relabel each of these two sets by $[n-\ell]$ in increasing
order. The \emph{derived board} $B_\alpha \subseteq [n-\ell] \times [n-\ell]$
consists of the squares of $B_S$ lying in a surviving row and a surviving
column, and $B^c_\alpha$ consists of those of $B^c_S$. The relabelling is a
bijection preserving rows and columns, so it preserves nonattacking placements;
in particular $B_\alpha$ and $B^c_\alpha$ are complementary boards in
$[n-\ell] \times [n-\ell]$.
\end{definition}

\begin{theorem}[{\cite[Theorem 9]{kagey}}]
Let $\mathcal{W}$ be a finite set of words ordered lexicographically over the
alphabet $[n]$. If the number of words of $\mathcal{W}$ beginning with a given
prefix can be computed in polynomial time, ranking and unranking in
$\mathcal{W}$ can be computed in polynomial time.
\end{theorem}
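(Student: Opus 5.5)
The plan is the standard prefix-counting reduction. Write $N(\beta)$ for the number of words of $\mathcal{W}$ beginning with a prefix $\beta$. By hypothesis each $N(\beta)$ can be computed in time polynomial in the input size, say in $n$ and the word length $m$. For simplicity I will assume all words have the same length $m$; in our application $m = n$. I will also assume $m$ is polynomial in $n$. The idea is to reconstruct the lexicographic position of a word, or the word at a given position, one letter at a time. Each step uses at most $n$ prefix counts.

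For ranking, take $w = w_1 \cdots w_m \in \mathcal{W}$ and use $0$-based ranks. A word $v \in \mathcal{W}$ precedes $w$ exactly when there is a first index $i$ with $v_i \neq w_i$ and $v_i < w_i$. Grouping the preceding words by this index $i$ and by the letter $c = v_i$ gives a disjoint union over $i$ and $c$. This yields
\[
\mathrm{rank}(w) = \sum_{i=1}^{m} \sum_{c=1}^{w_i - 1} N(w_1 \cdots w_{i-1} c),
\]
which takes at most $mn$ calls to the counting oracle. If words of different lengths were allowed, I would add one term per $i$ for the proper prefixes of $w$ that lie in $\mathcal{W}$. These precede $w$ lexicographically, and membership can be detected as $N(\beta)$ minus the sum of $N(\beta c)$ over all letters $c$.

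For unranking, given $r$ with $0 \le r < |\mathcal{W}| = N(\varepsilon)$, I would maintain a current prefix $\beta$, initially empty, and a residual rank $r$. I would keep the invariant that the answer is the word of rank $r$ among the words extending $\beta$. At each step I scan $c = 1, 2, \dots, n$. While $r \ge N(\beta c)$, I subtract $N(\beta c)$ from $r$ and move to the next $c$. At the first $c$ with $r < N(\beta c)$, I append $c$ to $\beta$. The invariant holds because the words extending $\beta$ are listed in lexicographic order as consecutive blocks of those extending $\beta 1$, then $\beta 2$, and so on. Correctness then follows by induction on $|\beta|$, and after $m$ steps $\beta$ is the desired word. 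This again costs at most $mn$ oracle calls.

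The only point needing care is the complexity accounting, and it is the mild main obstacle. The counts can be as large as $n!$, so they have $O(n\log n)$ bits. The additions, subtractions and comparisons on them are therefore polynomial-time, which together with the polynomially many oracle calls gives a polynomial total running time.
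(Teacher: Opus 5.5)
Your proposal is correct, and it is the standard prefix-counting reduction behind the cited result. The paper gives no proof of its own but uses exactly this scheme in the proof of Theorem~\ref{thm:main}, scanning each position and each letter for at most $n^2$ prefix counts. One small point: for a general $\mathcal{W}$ the counts are bounded by $n^m$ rather than $n!$, so they have $O(m\log n)$ bits. This is still polynomial under your assumption on $m$, and it agrees with your $O(n\log n)$ bound when $m=n$.
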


We take $\mathcal{W}$ to be the set of $S$-avoiding permutations, with each $\pi$
regarded as the word $(\pi(1)\dots\pi(n))$.

\section{The forbidden board is a union of cycles}
\begin{definition}
Let $S = \{s_1, s_2\}$ with $s_1 \not\equiv s_2 \pmod n$. Write $\GS := G(B^c_S)$, the graph of forbidden squares of $S$.
\end{definition}

\begin{lemma}\label{lem:2regular}
$\GS$ is a disjoint union of cycles.
\end{lemma}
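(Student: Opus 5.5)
The plan is to show that $\GS$ is a finite simple $2$-regular graph and then invoke the standard fact that every finite $2$-regular graph is a vertex-disjoint union of cycles. Finiteness is clear, since $\GS$ has at most $n^2$ vertices, and simplicity is built into the definition of $G(B)$. So everything reduces to computing the degree of an arbitrary forbidden square.

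First, the structure of $B^c_S$ row by row and column by column. Since $s_1 \not\equiv s_2 \pmod n$, we must have $n \ge 2$. In row $i$, the forbidden squares are exactly $(i, j)$ with $j \equiv i + s_1$ or $j \equiv i + s_2 \pmod n$. Each congruence has exactly one solution $j \in [n]$, and the two solutions differ because $s_1 \not\equiv s_2$. So every row contains exactly two forbidden squares. By the same argument applied to $i \equiv j - s_1$ and $i \equiv j - s_2$, every column $j$ contains exactly two forbidden squares. In particular $|B^c_S| = 2n$.

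Next, the degree count. Fix a forbidden square $v = (i,j)$. Its neighbours in $\GS$ are the other forbidden squares in row $i$, of which there is exactly one, and the other forbidden squares in column $j$, of which there is exactly one. These two neighbours are distinct: two distinct squares cannot share both a row and a column, and a square sharing both with $v$ would be $v$ itself. Hence $\deg v = 2$ for every vertex, so $\GS$ is $2$-regular.

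Finally, the cycle decomposition. In a finite $2$-regular graph, start at any vertex and repeatedly leave each vertex along the edge not used to enter it. By finiteness some vertex repeats. The first repeated vertex must be the starting vertex, since any other vertex would then have degree at least $3$. So each connected component is a single cycle, and simplicity forces length at least $3$. (For instance, for $n=2$ all four squares are forbidden and $\GS$ is a $4$-cycle, alternating row-edges and column-edges.)

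No step is a real obstacle. The only point needing care is verifying that the row-neighbour and the column-neighbour are distinct, and that each row and each column carries exactly two forbidden squares rather than one. Both facts rest on the hypothesis $s_1 \not\equiv s_2 \pmod n$.
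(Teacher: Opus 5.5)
Your proposal is correct and follows essentially the same route as the paper: show that each row and each column of $B^c_S$ contains exactly two forbidden squares (using $s_1 \not\equiv s_2 \pmod n$), deduce that every vertex of $\GS$ has exactly one row-neighbour and one distinct column-neighbour, and conclude from $2$-regularity. Your justification that the two neighbours are distinct (a square sharing both row and column with $v$ must be $v$ itself) and your explicit walk argument for the cycle decomposition fill in details that the paper states more briefly, the latter by citing the standard fact.
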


\begin{proof}
Write $A_i = (i, i+s_1)$ and $B_i = (i, i+s_2)$ for $i \in [n]$, with column indices
reduced modulo $n$. Since $s_1 \not\equiv s_2$, there are $2n$ distinct squares; these are the forbidden squares.

Row $i$ contains the two forbidden squares $A_i$ and $B_i$, so each of them has exactly one neighbor from a shared row. Column $j$ contains the two forbidden squares $A_{j-s_1}$ and $B_{j-s_2}$, so each square has exactly one neighbor from a shared column. These neighbors are distinct since a common row and column would force $s_1 \equiv s_2$.

Hence every vertex of $\GS$ has degree exactly $2$. A finite graph where each vertex has degree exactly $2$ is a disjoint union of cycles.
\end{proof}

\begin{lemma}\label{lem:gcd}
$\GS$ has exactly $\gcd(|s_1-s_2|,\, n)$ cycles, each of length $2n / \gcd(|s_1-s_2|,\, n)$.
\end{lemma}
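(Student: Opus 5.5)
The plan is to trace a cycle of $\GS$ explicitly, using the neighbor structure from the proof of Lemma~\ref{lem:2regular}, and to show that it is the orbit of a translation on $\mathbb{Z}/n$. Let $d = s_2 - s_1$. From $A_i = (i, i+s_1)$, the row neighbor is $B_i = (i, i+s_2)$. The column of $B_i$ is $i+s_2$, and the other forbidden square in that column is $A_{i'}$ with $i' + s_1 \equiv i + s_2$, that is, $i' \equiv i + d$. Walking along the cycle therefore gives the sequence
\[
A_i,\ B_i,\ A_{i+d},\ B_{i+d},\ A_{i+2d},\ \dots,
\]
with all indices taken modulo $n$. Because every vertex has degree exactly $2$ by Lemma~\ref{lem:2regular}, this walk alternates row edges and column edges and never backtracks. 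So the cycle through $A_i$ is exactly $\{A_{i+kd}, B_{i+kd} : k \in \mathbb{Z}\}$.

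Next, count the vertices of that cycle. The set $\{i + kd \bmod n : k \in \mathbb{Z}\}$ is the coset $i + \langle d\rangle$ in $\mathbb{Z}/n$. The subgroup $\langle d \rangle$ equals $\langle g\rangle$ with $g = \gcd(|d|, n)$, so it has $n/g$ elements. The cycle thus contains $n/g$ squares $A$ and $n/g$ squares $B$, all distinct as noted in Lemma~\ref{lem:2regular}. Its length is therefore $2n/g$.

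Every square is either some $A_i$ or some $B_i$, and $B_i$ lies on the same cycle as $A_i$. Hence the cycles correspond bijectively to the cosets of $\langle g\rangle$, and there are $g$ of them. The same conclusion follows from the count $2n / (2n/g) = g$.

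The only step needing care is the first: checking that the walk really closes up exactly after $n/g$ steps of $+d$ and not earlier. This holds because the $A$-indices visited are distinct until they repeat, and they first repeat at the order of $d$ in $\mathbb{Z}/n$, which is $n/g$. The case $g = n$ cannot occur, since $s_1 \not\equiv s_2$; the edge case $n/g = 1$ is therefore excluded. When $n/g = 2$, the cycle has length $4$ and is a genuine $4$-cycle, because the two row neighbors and the two column neighbors are distinct.
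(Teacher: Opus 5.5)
Your proof is correct and follows the paper's argument. Both trace the cycle through $A_i$ by the row step $A_i \to B_i$ followed by the column step $B_i \to A_{i\pm d}$. Both identify the $A$-indices of a cycle with an orbit of a translation on $\mathbb{Z}/n$ (your coset $i+\langle d\rangle$), which has size $n/\gcd(|d|,n)$, and then double that count to include the $B$-squares. Your added checks on non-backtracking and on the excluded case $n/g=1$ are sound refinements of the same route.
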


\begin{proof}
Write $d = s_1 - s_2$ and traverse a cycle of $\GS$. From $A_i$, the neighbor arising from a shared row is $B_i$. From $B_i = (i, i+s_2)$, the neighbor arising from a shared column is $A_j$, with $A_j = (j, j+s_1)$. Since they share the same column, $i+s_2 = j+s_1$ and $A_j = A_{i-d}$.

The indices appearing in the cycle of $A_i$ are the orbit of $i$ under the translation $i \mapsto i-d$. The orbit closes at the least $k > 0$ with $kd \equiv 0 \pmod n$, so $k = n/ \gcd(|d|,\,n)$ and corresponds to $n/ \gcd(|d|,\,  n)$ elements. Since each element has 2 vertices of $\GS$ that correspond to $A_i$ and $B_i$, the length of a cycle is $2n/ \gcd(|d|, \, n)$. Since the orbits partition the $n$ indices, there are $\gcd(|d|,\,n)$ cycles.
\end{proof}

\section{The board induced by a prefix}

\begin{lemma}\label{lem:rowcolpair}
Let $i, j \in [n]$. The two squares of $\GS$ lying in row $i$ are adjacent in
$\GS$, and so are the two squares lying in column $j$.
\end{lemma}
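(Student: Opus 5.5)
This is essentially immediate from the definition of $\GS$ as $G(B^c_S)$, together with the square count carried out in the proof of Lemma~\ref{lem:2regular}. The plan is to identify exactly which forbidden squares lie in a given row or column, and then to invoke the definition of adjacency in $G(B)$: two squares are adjacent precisely when they share a row or a column.

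Fix a row $i\in[n]$. The forbidden squares of $B^c_S$ in row $i$ are exactly those $(i,j)$ with $j\equiv i+s_1$ or $j\equiv i+s_2 \pmod n$, namely $A_i=(i,i+s_1)$ and $B_i=(i,i+s_2)$. Since $s_1\not\equiv s_2 \pmod n$, these are two distinct squares, so row $i$ contains exactly these two vertices of $\GS$. They share row $i$, so by the definition of $G(B^c_S)$ they are joined by an edge.

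For a column $j\in[n]$, the square $(i,j)$ is forbidden precisely when $i\equiv j-s_1$ or $i\equiv j-s_2 \pmod n$. The forbidden squares in column $j$ are therefore $A_{j-s_1}$ and $B_{j-s_2}$, with row indices reduced into $[n]$. Again $s_1\not\equiv s_2$ makes the two row indices distinct, so these are two distinct squares sharing column $j$, and hence they are adjacent in $\GS$.

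I expect no real obstacle. The only care needed is the modular bookkeeping: reduce indices into $[n]$, and use $s_1\not\equiv s_2$ to guarantee that each row and each column contains exactly two distinct forbidden squares rather than one. It is also worth remarking, to connect with Lemma~\ref{lem:2regular}, that these row and column edges are exactly the two edges at each vertex. In particular, in the cycle structure every vertex has one "row edge" and one "column edge" alternating around the cycle, which is presumably how the lemma will be used for derived boards.
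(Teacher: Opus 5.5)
Your proof is correct and matches the paper's argument: you identify the forbidden squares in row $i$ as $A_i, B_i$ and in column $j$ as $A_{j-s_1}, B_{j-s_2}$, use $s_1 \not\equiv s_2 \pmod n$ to show each pair consists of two distinct squares, and conclude adjacency from the definition of $G(B^c_S)$. The modular bookkeeping you flag is handled exactly as the paper does.
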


\begin{proof}
Row $i$ contains exactly the squares $A_i = (i,\, i + s_1)$ and
$B_i = (i,\, i + s_2)$, and these are distinct because $s_1 \not\equiv s_2
\pmod n$. They share row $i$, so by definition they are adjacent in $\GS$.

Column $j$ contains exactly the squares $A_{j - s_1}$ and $B_{j - s_2}$, which
are again distinct, and they share column $j$; hence they are adjacent as well.
\end{proof}

\begin{remark}\label{rmk:overlap}
A prefix $\alpha$ of length $\ell$ deletes rows $1, \dots, \ell$ and columns
$\alpha_1, \dots, \alpha_\ell$. By Lemma~\ref{lem:rowcolpair}, each such deletion
removes the two endpoints of an edge of $\GS$. These edges need not be disjoint:
a square may lie in a deleted row and in a deleted column at once. The set of
deleted vertices is therefore a union of edges of $\GS$, not necessarily a
disjoint one.
\end{remark}

\begin{theorem}\label{thm:components}
Let $B^c_\alpha$ be the complementary board induced by a valid prefix $\alpha$.
Every connected component of $G(B^c_\alpha)$ is either an untouched cycle of
$\GS$ or a path.
\end{theorem}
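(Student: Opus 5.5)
The plan is to identify $G(B^c_\alpha)$ with an induced subgraph of $\GS$ and then use that induced subgraphs of cycles are easy to describe. The vertices of $G(B^c_\alpha)$ are the forbidden squares lying in a surviving row and a surviving column. Two such squares are adjacent in $G(B^c_\alpha)$ exactly when they share a row or a column. Since the relabelling preserves rows and columns bijectively, this happens exactly when they share a row or a column in the original board. So the first step is to record that
\[
G(B^c_\alpha) = \GS - D,
\]
the subgraph of $\GS$ induced on the complement of the deleted set $D$. Here $D$ consists of the forbidden squares lying in a row $i \le \ell$ or in a column $\alpha_i$. Validity of $\alpha$ is not actually needed for this identification; it only guarantees that the derived board is meaningful for counting.

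The second step uses Lemma~\ref{lem:2regular}: $\GS$ is a disjoint union of cycles $C_1,\dots,C_g$. Deleting vertices acts componentwise, so $\GS - D$ is the disjoint union of the graphs $C_t - (D\cap C_t)$, and it suffices to treat one cycle $C$.

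\begin{itemize}
\item If $D\cap C=\emptyset$, the cycle survives unchanged. This is the "untouched cycle" case.
\item If $D\cap C = C$, nothing remains from $C$.
\item Otherwise $D \cap C$ is a nonempty proper subset of the vertices of a cycle.
\end{itemize}

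In the last case, removing it leaves an induced subgraph of a cycle with at least one vertex missing. Such a graph is a disjoint union of paths: the maximal arcs of consecutive surviving vertices between deleted ones. Each arc induces a path because the only possible extra edge, the one closing the cycle, would require every vertex of $C$ to survive. To make this rigorous, I will list the vertices of $C$ cyclically as $v_0,\dots,v_{m-1}$, with $m = 2n/\gcd(|s_1-s_2|,n) \ge 2$ by Lemma~\ref{lem:gcd}. The induced edges are exactly the pairs $v_kv_{k+1}$ with both ends surviving, so each component is a set of cyclically consecutive surviving indices, which is a path because at least one index is missing.

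I expect the main obstacle to be degenerate small cases, where the cycle structure needs care. When $m=2$, the two squares share both a row and a column only if $s_1\equiv s_2$, which is excluded, so this case does not occur. More generally, I need $\GS$ to be a simple graph for "cycle" to mean what the argument requires. If $m=2$ occurred, $\GS$ would have a double edge, which is not possible, so in fact $m \ge 4$ whenever $n\ge 2$ and $s_1 \not\equiv s_2$. The case $n=1$ is excluded by that same hypothesis. With this settled, every component of $G(B^c_\alpha)$ is either a full cycle of $\GS$ disjoint from $D$ or a path, possibly a single vertex. Remark~\ref{rmk:overlap} is not needed for the statement, though it explains why the deleted set is a union of edges.
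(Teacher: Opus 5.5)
Your proposal is correct and follows the paper's proof: you identify $G(B^c_\alpha)$ with the subgraph of $\GS$ induced on the surviving squares, reduce to a single cycle via Lemma~\ref{lem:2regular}, and observe that deleting a nonempty set of vertices from a cycle leaves the maximal runs of surviving vertices as paths. Your extra checks are harmless refinements that the paper leaves implicit: that no cycle has length $2$, so that $\GS$ is a genuine simple $2$-regular graph, and the explicit case where a whole cycle is deleted.
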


\begin{proof}
The board $B^c_\alpha$ consists of the squares of $\GS$ that lie in no deleted
row and in no deleted column. Two surviving squares share a row or a column in
$B^c_\alpha$ exactly when they do so in $\GS$, so $G(B^c_\alpha)$ is the subgraph
of $\GS$ induced by the surviving vertices.

By Lemma~\ref{lem:2regular}, $\GS$ is a disjoint union of cycles, and an induced
subgraph of a disjoint union of cycles is the disjoint union of the induced
subgraphs of each cycle. It therefore suffices to consider a single cycle $C$.

If no vertex of $C$ is deleted, the induced subgraph is $C$ itself. Otherwise,
deleting a nonempty set of vertices from a cycle leaves a disjoint union of
paths, one for each maximal run of consecutive surviving vertices. Hence every
component of $G(B^c_\alpha)$ is an untouched cycle or a path.
\end{proof}

\section{Prefix counting and unranking}
\begin{proposition}\label{prop:polys}
Let $P_m$ and $Q_m$ denote the rook polynomials of a path and of a cycle with $m$ squares, respectively. Then:
\[
  P_m(x) = \sum_{k=0}^{\lceil m/2 \rceil} \binom{m-k+1}{k} x^k,
  \qquad
  Q_m(x) = \sum_{k=0}^{\lfloor m/2 \rfloor} \frac{m}{m-k} \binom{m-k}{k} x^k.
\]
\end{proposition}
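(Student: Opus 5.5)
The plan is to reduce rook placements on these boards to independent sets in the graph $G(B)$. Then the two formulas become the standard counts of independent $k$-sets in a path and in a cycle. By definition, two squares of a board $B$ are adjacent in $G(B)$ exactly when they share a row or a column, that is, exactly when two rooks placed on them attack each other. Hence a placement of $k$ nonattacking rooks on $B$ is the same thing as an independent set of size $k$ in $G(B)$. So $r_k$ is the number of independent $k$-sets of $G(B)$. Here "a path (resp. cycle) with $m$ squares" means a board whose graph is the path (resp. cycle) on $m$ vertices, as in Theorem~\ref{thm:components}.

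For the path, label the vertices $1,\dots,m$ in order. An independent $k$-set $v_1<\dots<v_k$ satisfies $v_{t+1}\ge v_t+2$. The substitution $w_t=v_t-(t-1)$ is a bijection onto the $k$-subsets of $[m-k+1]$, which gives $\binom{m-k+1}{k}$. The binomial vanishes once $k>\lceil m/2\rceil$, because then $m-k+1<k$. So the stated range of summation loses nothing, and we obtain $P_m$. The case $m=0$ gives $P_0=1$, consistent with the formula.

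For the cycle on vertices $1,\dots,m$, I will condition on whether vertex $1$ is chosen.
\begin{itemize}
\item If vertex $1$ is not chosen, the remaining vertices form a path on $m-1$ vertices. By the path count this contributes $\binom{m-k}{k}$.
\item If vertex $1$ is chosen, its two neighbours are excluded. The remaining $k-1$ vertices form an independent set in a path on $m-3$ vertices, contributing $\binom{m-k-1}{k-1}$.
\end{itemize}
Using $\binom{m-k-1}{k-1}=\frac{k}{m-k}\binom{m-k}{k}$, the sum is $\frac{m}{m-k}\binom{m-k}{k}$. The range $k\le\lfloor m/2\rfloor$ is exactly where this is nonzero.

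The only delicate point, and the step I expect to need care, is the degenerate small cases. The conditioning argument needs vertex $1$ to have two distinct neighbours that are not adjacent to each other, i.e.\ $m\ge 4$. This holds for every cycle that can occur. By Lemma~\ref{lem:gcd}, a cycle of $\GS$ has length $2n/\gcd(|s_1-s_2|,n)$, and $\gcd(|s_1-s_2|,n)\le n/2$ because $s_1\not\equiv s_2 \pmod n$. So the length is at least $4$. Moreover, only untouched cycles of $\GS$ appear as cycle components in Theorem~\ref{thm:components}. For completeness I would also check $m=4$ directly: the formula gives $1+4x+2x^2$, matching the $4$-cycle.
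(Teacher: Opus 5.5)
Your proof is correct and takes the paper's route: nonattacking placements on a path or cycle board are exactly selections of $k$ pairwise nonconsecutive squares, and the two counts are Kaplansky's lemmas, which the paper simply cites while you prove them directly (the shift bijection for the path, conditioning on vertex $1$ for the cycle). Your restriction to $m\ge 4$ is harmless but not actually needed, since the conditioning argument also works for $m=3$, where it gives $1+3x$.
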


\begin{proof}
Two squares of a path or of a cycle share a row or a column precisely when they are consecutive, so a set of $k$ squares is nonattacking when no two elements are consecutive. Choosing $k$ nonconsecutive elements from a path with $m$ squares can be done in $\binom{m-k+1}{k}$ ways, while choosing $k$ nonconsecutive elements from a cycle with $m$ squares can be done in $\frac{m}{m-k} \binom{m-k}{k}$ ways; these are Kaplansky's two lemmas~\cite{kaplansky}. The first polynomial is equal to the $m$-th Fibonacci polynomial from \cite[Definition 29]{kagey}, which by \cite[Lemma 30]{kagey} is the rook polynomial of the staircase-shaped board with $m$ squares.
\end{proof}

\begin{corollary}\label{cor:product}
Let $\mathcal{C}_\alpha$ denote the set of connected components of $G(B^c_\alpha)$. Then:
\[
p_{B^c_\alpha}(x) = \prod_{K \in \mathcal{C}_\alpha} p_K(x)
\]
where $p_K = P_{|K|}$ if $K$ is a path and $p_K = Q_{|K|}$ if $K$ is a cycle. In particular, $p_{B^c_\alpha}$ is determined by the sizes of the components together with their types.
\end{corollary}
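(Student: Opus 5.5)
The corollary follows from the standard multiplicativity of rook polynomials over boards whose squares split into mutually non-attacking pieces, combined with Theorem~\ref{thm:components} and Proposition~\ref{prop:polys}. First, I will argue that each connected component $K \in \mathcal{C}_\alpha$, regarded as a set of squares of $B^c_\alpha$, is itself a board. Two squares in different components share neither a row nor a column, since otherwise they would be adjacent in $G(B^c_\alpha)$ and hence lie in the same component. So the components occupy pairwise disjoint sets of rows and pairwise disjoint sets of columns. More precisely, a square of $K$ and a square of $K'$ with $K \neq K'$ never lie in a common row or column.

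Next comes the product formula. A placement of $k$ nonattacking rooks on $B^c_\alpha$ restricts to a nonattacking placement of $k_K$ rooks on each $K$, with $\sum_K k_K = k$. Conversely, any choice of nonattacking placements on the individual components is globally nonattacking, because rooks in different components never attack each other, by the previous paragraph. This correspondence is a bijection, so $r_k(B^c_\alpha) = \sum_{\sum k_K = k} \prod_K r_{k_K}(K)$. This is exactly the coefficient identity for $\prod_K p_K(x)$. The argument is the usual Kaplansky--Riordan factorization for disjoint boards, and I would simply state it with this short justification.

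Finally, I will identify each factor. By Theorem~\ref{thm:components}, each $K$ is either an untouched cycle of $\GS$ or a path. Within $K$, two squares attack each other exactly when they are adjacent in $G(B^c_\alpha)$, which is the induced subgraph of $\GS$. So the nonattacking $k$-sets of $K$ are precisely its independent $k$-sets. Proposition~\ref{prop:polys} then gives $p_K = P_{|K|}$ or $Q_{|K|}$. The only point needing care is that the attack relation on $K$ coincides with graph adjacency. This holds by the definition of $G$, since squares attack exactly when they share a row or a column. Small degenerate cases (a single vertex, or a path with two squares) are still paths and are covered by the formula. The final sentence of the corollary is then immediate, because each factor depends only on $|K|$ and the type of $K$.
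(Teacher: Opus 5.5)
Your proposal is correct and follows the paper's proof. You use the same key observation: squares in distinct components share no row or column, since otherwise they would be adjacent in $G(B^c_\alpha)$. The paper then cites Kagey's theorem on disjoint boards for multiplicativity, whereas you prove that factorization directly via the restriction bijection and spell out the identification of each factor with $P_{|K|}$ or $Q_{|K|}$ through Proposition~\ref{prop:polys}.
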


\begin{proof}
By Theorem~\ref{thm:components}, every component is a path or a cycle. Two squares lying in distinct components share neither a row nor a column, since otherwise they would be adjacent in $G(B^c_\alpha)$ and would lie in the same component. The components are pairwise disjoint boards in the sense of \cite[Definition 25]{kagey} and \cite[Theorem 26]{kagey} gives that the rook polynomial of $B^c_\alpha$ is the product of the rook polynomials of its components.
\end{proof}

\begin{theorem}\label{thm:main}
For every $|S| = 2$ and $s_1 \not\equiv s_2$ there is a polynomial algorithm computing $\#\mathrm{prefix}(\alpha)$ for any valid prefix and a polynomial lexicographic ranking and unranking algorithm for $S$-avoiding permutations.
\end{theorem}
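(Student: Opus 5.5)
The plan is to reduce $\#\mathrm{prefix}(\alpha)$ to a rook-polynomial computation on the derived complementary board $B^c_\alpha$, evaluate that polynomial through Corollary~\ref{cor:product}, and then invoke Kagey's Theorem~9 for ranking and unranking.

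First, handle invalid prefixes and the reduction. If $\alpha$ repeats a value or violates the avoidance condition, we return $0$; this takes $O(\ell)$ time. Otherwise, the $S$-avoiding permutations extending $\alpha$ are in bijection with the placements of $n-\ell$ nonattacking rooks on the derived board $B_\alpha$. This holds because rows $1,\dots,\ell$ and columns $\alpha_1,\dots,\alpha_\ell$ are already used, and the relabelling preserves nonattacking placements. Since $B_\alpha$ and $B^c_\alpha$ are complementary in $[m]\times[m]$ with $m=n-\ell$, the Kaplansky--Riordan inclusion--exclusion formula~\cite{kaplanskyriordan} gives
\[
\#\mathrm{prefix}(\alpha)=\sum_{k=0}^{m}(-1)^k r_k(B^c_\alpha)\,(m-k)!,
\]
where $r_k(B^c_\alpha)$ is the $k$-th coefficient of $p_{B^c_\alpha}$.

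Next, compute $p_{B^c_\alpha}$ explicitly.
\begin{enumerate}
\item Build $\GS$ in $O(n)$ time using the traversal of Lemma~\ref{lem:gcd}: $A_i\to B_i\to A_{i-d}$. This gives each cycle as a cyclic list of at most $2n$ squares.
\item Mark as deleted every square lying in a row $\le \ell$ or in a column among the $\alpha_i$.
\item Walk each cycle. An unmarked cycle contributes $Q_{|C|}$. Otherwise, each maximal run of unmarked consecutive vertices contributes $P_{\text{run length}}$; this is exactly the decomposition of Theorem~\ref{thm:components}.
\item Multiply the resulting polynomials, as justified by Corollary~\ref{cor:product}.
\end{enumerate}
There are at most $2n$ components, each polynomial has degree at most $n$, and its coefficients come from Proposition~\ref{prop:polys}. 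The product therefore costs $O(n^3)$ arithmetic operations. All coefficients are bounded by $n!$ or by binomials of size at most $2^{2n}$, so they have $O(n\log n)$ bits and the bit complexity stays polynomial. The alternating sum above is also a polynomial-size computation.

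The one point needing care is the bookkeeping in the rook-polynomial step. The coefficients $r_k$ count placements on $B^c_\alpha$ viewed inside the relabelled $[m]\times[m]$ grid, and the rook polynomial depends only on which squares share rows or columns. Relabelling preserves exactly this adjacency, so Theorem~\ref{thm:components} applies verbatim. Degenerate components (a single square, or the empty path $P_0=1$) must fit the formulas of Proposition~\ref{prop:polys}; both give $1+x$ and $1$ correctly. The step I consider most delicate is confirming that the inclusion--exclusion formula is applied to the correct square board: the derived board must be square of size $m$ with $B_\alpha\cup B^c_\alpha=[m]\times[m]$, which the definition of derived board guarantees.

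Finally, the prefix count is polynomial-time computable for every valid or invalid prefix, and $\mathcal{W}$ is a set of words over $[n]$ ordered lexicographically. Kagey's Theorem~9 therefore yields polynomial-time lexicographic ranking and unranking, which proves Theorem~\ref{thm:main}.
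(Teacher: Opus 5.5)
Your proposal is correct and follows the paper's own argument. You decompose $G(B^c_\alpha)$ into untouched cycles and paths along the cycles of $\GS$, multiply the corresponding $P_m$ and $Q_m$ factors via Corollary~\ref{cor:product}, apply inclusion--exclusion on the derived $(n-\ell)\times(n-\ell)$ board, and invoke Kagey's Theorem~9. The differences are minor: you explicitly return $0$ for invalid prefixes, which the paper leaves implicit. You also give a looser $O(n^3)$ per-prefix bound where the paper states $O(n^2)$; either is polynomial.
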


\begin{proof}
The graph $\GS$ has exactly $2n$ vertices, so the cycles and their traversal orders can be computed in $O(n)$.

Let $\alpha$ be a valid prefix of length $\ell$, and let $D$ be the set of positions of the deleted squares along a cycle, with different sets for different cycles, that is, the union, over all deleted rows $i$, of $\{A_i, B_i\}$ and all deleted columns $j$ of $\{A_{j-s_1}, B_{j-s_2}\}$. By Lemma~\ref{lem:rowcolpair}, each of these contributions is a pair of adjacent positions; by Remark~\ref{rmk:overlap}, the pairs need not be disjoint, so $|D|$ may be smaller than $4\ell$ but this does not affect the conclusion of the argument below.

By Theorem~\ref{thm:components}, a cycle meeting no element of $D$ survives as a single component. On a cycle meeting $D$, the components are paths whose lengths are the gaps between consecutive elements of $D$ in the cyclic order, including the gap from the last element back to the first. These lengths are obtained in time $O(n)$ by sorting $D$ along each cycle.

By Corollary~\ref{cor:product}, the rook polynomial $p_{B^c_\alpha}$ is the product of the corresponding factors, which is computed in time $O(n^2)$ by naive convolution, and \cite[Lemma 21]{kagey}, applied to the derived board $B_\alpha$, then yields
\begin{equation}\label{eq:ie}
\#\mathrm{prefix}(\alpha)
  \;=\; \sum_{k \geq 0} (-1)^k\, r^c_k\, (n - \ell - k)!,
\end{equation}
where $r^c_k$ denotes the coefficient $r_k$ of $p_{B^c_\alpha}$. Hence $\#\mathrm{prefix}(\alpha)$ can be computed in polynomial time and \cite[Theorem 9]{kagey} gives a polynomial-time lexicographic ranking and unranking algorithm.

The unranking solution needs to look at every prefix size from $1$ to $n$ and every possible value $1$ to $n$ that can be placed on every prefix, so at most $n^2$ prefix counts are needed. Since each prefix is computed in time $O(n^2)$, it gives an overall bound of $O(n^4)$ arithmetic operations on integers with $O(n \log n)$ bits.
\end{proof}

\begin{corollary}\label{cor:pm11}
There is a polynomial-time lexicographic ranking and unranking algorithm for $\{-1,1\}$-avoiding permutations.
\end{corollary}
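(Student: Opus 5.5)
The corollary is a direct specialization of Theorem~\ref{thm:main}, so the plan is to check its hypotheses for $S=\{-1,1\}$ and then invoke it. Take $s_1=-1$ and $s_2=1$. The one hypothesis to verify is $s_1\not\equiv s_2 \pmod n$, which means $n\nmid 2$, i.e.\ $n\geq 3$.

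For $n\geq 3$, Lemma~\ref{lem:2regular} and Lemma~\ref{lem:gcd} apply. They show that $\GS$ consists of $\gcd(2,n)$ cycles of length $2n/\gcd(2,n)$: one cycle of length $2n$ when $n$ is odd, and two cycles of length $n$ when $n$ is even. This makes the traversal step of Theorem~\ref{thm:main} concrete. Theorem~\ref{thm:main} then supplies a polynomial-time algorithm for $\#\mathrm{prefix}(\alpha)$, and through \cite[Theorem 9]{kagey} a polynomial-time lexicographic ranking and unranking algorithm. The bound is $O(n^4)$ arithmetic operations on $O(n\log n)$-bit integers.

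The only real obstacle is the degenerate range $n\leq 2$, where $-1\equiv 1 \pmod n$ and Theorem~\ref{thm:main} does not apply.
\begin{itemize}
\item For $n=1$, every residue is $0$. The single permutation has $\pi(1)-1-s\equiv 0$, so it is not $S$-avoiding and the family is empty.
\item For $n=2$, the condition becomes $\pi(i)\not\equiv i+1 \pmod 2$, so $\pi(i)=i$ for every $i$ and the identity is the unique $S$-avoiding permutation.
\end{itemize}
These finite cases are ranked and unranked trivially in constant time. Together with the case $n\geq 3$, this completes the proof.
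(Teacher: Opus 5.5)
Your proposal is correct and follows the paper's proof: check that $-1\not\equiv 1 \pmod n$ for $n\geq 3$, invoke Theorem~\ref{thm:main}, and handle $n\leq 2$ separately. Your explicit verification that the family is empty for $n=1$ and consists only of the identity for $n=2$ fills in what the paper simply calls trivial, and the cycle-structure remark via Lemma~\ref{lem:gcd} is harmless but not needed.
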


\begin{proof}
For $n > 2$ we have that $-1 \not\equiv 1\pmod n$ so the previous theorem applies for $S = \{-1,1\}$. The remaining cases $n\leq2$ are trivial.
\end{proof}

\begin{remark}
Corollary~\ref{cor:pm11} answers~\cite[Problem 41]{kagey}. The values $-1$ and $1$ play no role in the argument, so Theorem~\ref{thm:main} settles \cite[Open Question 40]{kagey} for every $S$ with $|S| \leq 2$.
\end{remark}

\section{An application: the case $S = \{-1,1\}$}\label{sec:application}

By Lemma~\ref{lem:gcd} the shape of the board depends on $S$ only through $\gcd(|s_1-s_2|,\,n)$. Two different sets of offsets may therefore produce the same multiset of cycle lengths, or multisets that differ in a controlled way; in both cases their counts can be compared. We carry this out for $S = \{-1,1\}$ against the m\'enage case, and the comparison settles a conjecture recorded in the OEIS.

Write $a_S(n)$ for the number of $S$-avoiding permutations of $[n]$, that is, for the number of extensions of the empty prefix. Taking $\ell = 0$ in \eqref{eq:ie},
\begin{equation}\label{eq:ie0}
  a_S(n) \;=\; \sum_{k \geq 0} (-1)^k \, r_k \, (n-k)! ,
\end{equation}
where $r_k$ is the number of ways of choosing $k$ pairwise nonattacking forbidden squares, so that $r_0 = 1$ and $r_k = 0$ for $k > n$. By Corollary~\ref{cor:product}, applied with $\ell = 0$, the numbers $r_k$ are the coefficients of the product of the rook polynomials of the components of the full forbidden board.

\begin{lemma}\label{lem:lucas}
Let $u$ and $v$ be the two roots of $t^2 - t - x$ in an algebraic closure of $\mathbb{Q}(x)$, so that $u + v = 1$ and $uv = -x$. Then $Q_m(x) = u^m + v^m$ for every $m \geq 1$, and consequently
\[
  Q_n(x)^2 \;=\; Q_{2n}(x) \;+\; 2(-x)^n \qquad (n \geq 1).
\]
\end{lemma}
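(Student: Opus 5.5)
We prove $Q_m = u^m + v^m$ by showing that both sides satisfy the same second-order recurrence with the same initial values. The identity for $Q_n^2$ then follows by squaring.

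\emph{Step 1: a recurrence for $Q_m$.} Set $L_m := u^m + v^m$. Since $u$ and $v$ are roots of $t^2 = t + x$, we have $L_{m} = L_{m-1} + x L_{m-2}$, with $L_0 = 2$ and $L_1 = u+v = 1$. On the combinatorial side, count $k$-subsets of nonconsecutive elements of an $m$-cycle, with $m \geq 3$, by conditioning on a fixed vertex $v_0$. If $v_0$ is not chosen, the remaining vertices form a path with $m-1$ squares. If $v_0$ is chosen, its two neighbours are excluded and we are left with a path of $m-3$ squares. This gives $Q_m = P_{m-1} + x P_{m-3}$ (with $P_0 = 1$). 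The analogous endpoint decomposition for a path gives $P_m = P_{m-1} + x P_{m-2}$. It is cleaner to avoid the paths altogether and verify directly from the closed form of Proposition~\ref{prop:polys} that
\[
\frac{m}{m-k}\binom{m-k}{k} = \frac{m-1}{m-1-k}\binom{m-1-k}{k} + \frac{m-2}{m-1-k}\binom{m-1-k}{k-1}.
\]
This is the standard recurrence for Lucas-polynomial coefficients and is a routine binomial manipulation. Either way we obtain $Q_m = Q_{m-1} + x Q_{m-2}$ for $m \geq 3$.

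\emph{Step 2: initial values.} From the formula, $Q_1 = 1 = L_1$ and $Q_2 = 1 + 2x$. On the other side, $L_2 = (u+v)^2 - 2uv = 1 + 2x$. Since $Q_m$ and $L_m$ satisfy the same recurrence and agree at $m = 1, 2$, induction gives $Q_m = L_m$ for all $m \geq 1$.

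The main obstacle is the small cases $m = 1, 2$: there the ``cycle'' is degenerate, so one must check that the formula in Proposition~\ref{prop:polys} is the one intended, and the recurrence must be started at $m = 3$. Handling this by taking $m = 1, 2$ directly as base cases, as above, avoids the issue.

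\emph{Step 3: the squared identity.} For $n \geq 1$,
\[
Q_n^2 = (u^n + v^n)^2 = u^{2n} + v^{2n} + 2(uv)^n = Q_{2n} + 2(-x)^n.
\]
This uses Step 2 for both $m = n$ and $m = 2n$, together with $uv = -x$.
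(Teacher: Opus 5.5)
Your proof is correct, but it takes a different route from the paper's.

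\textbf{The paper's route.} It proves $Q_m=u^m+v^m$ in one step by quoting the Girard--Waring formula
$u^m+v^m=\sum_{k}\frac{m}{m-k}\binom{m-k}{k}(-1)^k e_1^{m-2k}e_2^{k}$
and substituting $e_1=1$, $e_2=-x$. The coefficients are matched directly, uniformly for all $m\ge 1$, with no induction and no base cases. In that route the degenerate cases $m=1,2$ you flag are never an obstacle.

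\textbf{Your route.} You show that $Q_m$ and $L_m$ satisfy the same recurrence $F_m=F_{m-1}+xF_{m-2}$ and agree at $m=1,2$. Your coefficient identity is exactly the inductive step of a proof of the two-variable Girard--Waring formula specialized to $e_1=1$. Both sides equal $\frac{m\,(m-k-1)!}{k!\,(m-2k)!}$ for $1\le k<m/2$, and the edge cases $k=0$ and $k=m/2$ are checked by hand. It would be worth writing this out rather than calling it routine.

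\textbf{Comparison.} Your argument is self-contained, whereas the paper's rests on a cited classical identity; the price is a little more bookkeeping. Your combinatorial aside, conditioning on a vertex of the cycle, has the extra merit of explaining why the Lucas recurrence appears. As sketched, though, that aside does not quite reach $Q_m=Q_{m-1}+xQ_{m-2}$ for small $m$. Expressing $Q_{m-1}$ and $Q_{m-2}$ through paths needs $m\ge 5$, or conventions for $P$ at negative indices. Since you rest the step on the direct binomial identity instead, this is not a gap.
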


\begin{proof}
The Girard--Waring formula~\cite{waring} expresses the power sum of two quantities in terms of their elementary symmetric functions $e_1 = u + v$ and $e_2 = uv$ as
\[
  u^m + v^m \;=\; \sum_{k = 0}^{\lfloor m/2 \rfloor} \frac{m}{m-k}\binom{m-k}{k}\,(-1)^k\, e_1^{\,m-2k}\, e_2^{\,k} .
\]
Substituting $e_1 = 1$ and $e_2 = -x$ turns $(-1)^k e_2^{\,k}$ into $x^k$ and leaves the closed formula for $Q_m$ of Proposition~\ref{prop:polys}. Squaring,
\[
  Q_n(x)^2 = (u^n + v^n)^2
           = u^{2n} + v^{2n} + 2(uv)^n
           = Q_{2n}(x) + 2(-x)^n . \qedhere
\]
\end{proof}

By \cite[Definition 24]{kagey} a m\'enage permutation is a permutation $\pi \in \Sn$ with $\pi(i) \neq i$ and $\pi(i) + 1 \not\equiv i \pmod n$ for every $i \in [n]$; equivalently, by \cite[Example 39]{kagey}, the m\'enage permutations are the $\{-1,0\}$-avoiding ones. Write $U(n) = a_{\{-1,0\}}(n)$ for their number, the sequence A000179 of~\cite{oeismenage}. Here $|s_1 - s_2| = 1$, so by Lemma~\ref{lem:gcd} the m\'enage board is a single cycle with $2n$ squares and its rook polynomial is $Q_{2n}$.

\begin{proposition}\label{prop:odd}
For every odd $n \geq 3$, $\;a_{\{-1,1\}}(n) = U(n)$.
\end{proposition}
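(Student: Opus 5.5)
For $S=\{-1,1\}$ we have $|s_1-s_2|=2$. The plan is to compare the rook polynomial of its forbidden board with that of the ménage board, using \eqref{eq:ie0}.

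First, for odd $n\geq 3$ we have $-1\not\equiv 1 \pmod n$, so Lemma~\ref{lem:gcd} applies. Since $\gcd(2,n)=1$ for odd $n$, the graph $\GS$ for $S=\{-1,1\}$ is a single cycle with $2n$ squares. By Corollary~\ref{cor:product} with $\ell=0$ (no deletions, so the unique component is an untouched cycle), the rook polynomial of the full forbidden board is $Q_{2n}(x)$. This is exactly the polynomial found above for the ménage board $\{-1,0\}$, where $|s_1-s_2|=1$ also gives one cycle of length $2n$.

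Next, the coefficients $r_k$ agree for every $k$, so the two inclusion--exclusion sums \eqref{eq:ie0} coincide term by term. Hence $a_{\{-1,1\}}(n)=a_{\{-1,0\}}(n)=U(n)$.

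There is essentially no obstacle. The only point needing care is that the factorisation into components and Proposition~\ref{prop:polys} depend only on the abstract graph $\GS$ and not on the geometric position of the squares. This holds because nonattacking sets are exactly the independent sets of $\GS$, and isomorphic graphs have the same independent-set counts. One should also check that $n\geq 3$ is needed so that $s_1\not\equiv s_2$ in both cases, which is why the statement excludes $n=1$.
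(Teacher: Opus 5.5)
Your proposal is correct and follows the paper's proof. Since $\gcd(2,n)=1$ for odd $n$, Lemma~\ref{lem:gcd} makes the $\{-1,1\}$ forbidden board a single cycle of $2n$ squares, just like the m\'enage board, so both have rook polynomial $Q_{2n}$ and the two instances of \eqref{eq:ie0} agree term by term. Your remark that nonattacking sets of forbidden squares are exactly the independent sets of $\GS$ is a correct and slightly more explicit justification of why only the isomorphism type of the graph matters.
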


\begin{proof}
For $S = \{-1,1\}$ we have $|s_1 - s_2| = 2$ and $\gcd(2,n) = 1$ because $n$ is odd, so by Lemma~\ref{lem:gcd} this board is also a single cycle with $2n$ squares. By Proposition~\ref{prop:polys}, the two boards therefore have the same rook polynomial $Q_{2n}$, hence the same numbers $r_k$. The right-hand side of \eqref{eq:ie0} depends on the board only through these numbers, so the two counts agree.
\end{proof}

\begin{theorem}\label{thm:even}
For every even $n \geq 4$, $\;a_{\{-1,1\}}(n) = U(n) + 2$.
\end{theorem}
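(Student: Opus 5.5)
For even $n \geq 4$ and $S = \{-1,1\}$ we have $|s_1 - s_2| = 2$ and $\gcd(2,n) = 2$. By Lemma~\ref{lem:gcd}, the forbidden board therefore splits into exactly two cycles, each with $n$ squares. Corollary~\ref{cor:product} with $\ell = 0$ then shows that its rook polynomial is $Q_n(x)^2$. The m\'enage board, by contrast, is a single cycle with rook polynomial $Q_{2n}(x)$.

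We compare the two polynomials using Lemma~\ref{lem:lucas}, which gives
\[
  Q_n(x)^2 = Q_{2n}(x) + 2(-x)^n .
\]
So if $r_k$ and $r'_k$ denote the coefficients for $\{-1,1\}$ and for the m\'enage board respectively, then $r_k = r'_k$ for every $k \neq n$, while $r_n = r'_n + 2(-1)^n = r'_n + 2$, because $n$ is even.

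Next we substitute these coefficients into \eqref{eq:ie0} for both boards and subtract. Every term with $k \neq n$ cancels, and what remains is
\[
  a_{\{-1,1\}}(n) - U(n) = (-1)^n\,(r_n - r'_n)\,(n-n)! = 1 \cdot 2 \cdot 0! = 2 .
\]
Again we use that $n$ is even, so $(-1)^n = 1$.

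The only point that needs care is the applicability of the earlier results. Lemma~\ref{lem:gcd} and Corollary~\ref{cor:product} require $s_1 \not\equiv s_2 \pmod n$, and this holds for both $\{-1,1\}$ and $\{-1,0\}$ when $n \geq 4$. We also need both cycle lengths $n$ and $2n$ to be at least $3$, so that the cycles are genuine and the cycle formula of Proposition~\ref{prop:polys} applies to them; this also holds for $n \geq 4$. With these checks done, the argument is just the identity of Lemma~\ref{lem:lucas} combined with \eqref{eq:ie0}, and I expect no real obstacle.
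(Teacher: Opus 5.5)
Your proposal is correct and follows the paper's proof essentially step for step: the two $n$-cycles give rook polynomial $Q_n(x)^2$, Lemma~\ref{lem:lucas} shows it differs from $Q_{2n}(x)$ by $2(-x)^n = 2x^n$, and subtracting the two instances of \eqref{eq:ie0} leaves only the $k=n$ term, which equals $2$. Your explicit checks that $s_1 \not\equiv s_2 \pmod n$ and that the cycle lengths are at least $3$ are sound; the paper leaves them implicit.
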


\begin{proof}
Now $\gcd(2,n) = 2$, so by Lemma~\ref{lem:gcd} the board of $S = \{-1,1\}$ is a disjoint union of two cycles with $n$ squares each, and by Corollary~\ref{cor:product} its rook polynomial is $Q_n(x)^2$. By Lemma~\ref{lem:lucas}, and because $n$ is even,
\[
  Q_n(x)^2 - Q_{2n}(x) \;=\; 2(-x)^n \;=\; 2x^n .
\]
The difference is a single monomial, so the two boards have the same $r_k$ for every $k \neq n$, while $r_n$ for $S = \{-1,1\}$ exceeds the ménage value by exactly $2$. Subtracting the two instances of \eqref{eq:ie0} therefore leaves a single term,
\[
  a_{\{-1,1\}}(n) - U(n) \;=\; (-1)^n \cdot 2 \cdot (n-n)! \;=\; 2 . \qedhere
\]
\end{proof}

\begin{remark}
At $x = 1$ the polynomials $P_m$ and $Q_m$ specialize to the Fibonacci and the Lucas numbers, and Lemma~\ref{lem:lucas} specializes to the classical identity $L_n^2 - L_{2n} = 2(-1)^n$. What Lemma~\ref{lem:gcd} adds is the dictionary between a set of offsets and a multiset of cycles, which turns an identity between polynomials into an identity between two counting problems. The entry A354152~\cite{oeis} records the relation of Proposition~\ref{prop:odd} without proof and states that of Theorem~\ref{thm:even} as a conjecture; the two results prove both.
\end{remark}

\begin{remark}
The argument uses the $2$-regularity of the graph essentially, so $|S| \geq 3$ is not covered. 
For $|S| = k$, each forbidden square has $k-1$ neighbors in its row and $k-1$ neighbors in its column, giving $2(k-1)$ neighbors, hence for $k \geq 3$ Lemma~\ref{lem:2regular} no longer applies, and \cite[Open Question 40]{kagey} remains open for those sets.
\end{remark}

\section*{Acknowledgments}
I thank Peter Kagey for the correspondence on Problem 41.

The identity of Lemma~\ref{lem:lucas} and its application in Section 6 were obtained by a large language model, which identified the formula of Proposition~\ref{prop:polys} as a Girard--Waring power sum and suggested comparing the board of $S = \{-1,\,1\}$ with the ménage board. The results of Sections 3 to 5 are the author's own. All statements and proofs were verified by the author.

\end{document}